\begin{document}
\title[On the torsion in the center conjecture]{On the\\ torsion in the center\\ conjecture} 
\thanks{\it 2000 AMS Mathematics Subject Classification:\rm\
53C20. Keywords: nonnegative curvature, nilpotent}\rm
\author{Vitali Kapovitch}
\address{Vitali Kapovitch\\Department of Mathematics\\University of Toronto\\
Toronto, Ontario, M5S 2E4, Canada.}\email{vtk@math.toronto.edu}
\author{Anton Petrunin }\address{Anton Petrunin\\ Department of Mathematics\\ Pennsylvania State University\\
University Park, State College, PA 16802, USA.
}\email{petrunin@math.psu.edu}
\author{Wilderich Tuschmann}\address{Wilderich Tuschmann
\\Arbeitsgruppe Differentialgeometrie
\\Institut f\"ur Algebra und Geometrie
\\Fakult\"at f\"ur Mathematik
\\Karlsruher Institut f\"ur Technologie
\\Englerstr. 2
\\D-76131 Karlsruhe, Deutschland.}\email{tuschmann@kit.edu}

\begin{abstract}
We present a condition for towers of fiber bundles which implies that the fundamental group of 
the total space has a nilpotent subgroup of finite index whose torsion is contained in its center.
Moreover, the index of the subgroup can be bounded in terms of the fibers of the tower.

Our result is motivated by the conjecture that
every almost nonnegatively curved closed $m$-dimensional manifold $M$ 
admits a finite cover $\tilde M$ for which the number of leafs is bounded in terms of $m$
such that the torsion of the fundamental group $\pi_1 \tilde M$ lies in its center.
\end{abstract}

\maketitle

\section*{Introduction}

\subsection{}
We prove the following topological result, which is motivated by Conjecture~\ref{con:tor} below.

\begin{thm}\label{thm:smooth}
Let $F_1,F_2,\dots,F_n$ be an array of closed manifolds 
such that each $F_i$ is either $\mathbb{S}^{1}$ or is simply connected. 
Assume $E$ is the total space of a tower of fiber bundles over a point
$$E=E_n\buildrel {F_n}\over \longrightarrow E_{n-1}\buildrel {F_{n-1}}\over\longrightarrow\dots\buildrel {F_1}\over\longrightarrow E_0=\{pt\}$$
and each of the bundles $E_k\buildrel {F_k}\over \longrightarrow E_{k-1}$ is homotopically trivial over the $1$-skeleton. 
Then the fundamental group $\pi_1E$ contains a nilpotent subgroup $\Gamma$ such that
$$[\pi_1E:\Gamma]\le \mathrm{Const}(F_1,F_2,\dots,F_n)\quad\text{and}\quad\mathrm{Tor}(\Gamma)\subset \mathrm{Z}(\Gamma),$$
where $\mathrm{Tor}(\Gamma)$ and $\mathrm{Z}(\Gamma)$ denote the torsion and the center of $\Gamma$, respectively.
\end{thm}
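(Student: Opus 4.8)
The plan is to translate the statement into group theory and then argue by induction on the length of the tower.

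First I would reduce everything to $\pi_1$. A simply connected fibre $F_k$ changes nothing, since the homotopy exact sequence of $F_k\to E_k\to E_{k-1}$ gives $\pi_1 E_k\cong\pi_1 E_{k-1}$. A fibre $F_k=\mathbb S^1$ contributes the image $M_k$ of $\pi_1\mathbb S^1=\mathbb Z$ in $\pi_1 E_k$; by exactness $M_k=\ker(\pi_1 E_k\to\pi_1 E_{k-1})$ and $M_k\cong\mathbb Z/\operatorname{im}(\partial\colon\pi_2E_{k-1}\to\mathbb Z)$ is cyclic. The hypothesis that the bundle is homotopically trivial over the $1$-skeleton forces its monodromy to vanish on a set of loops generating $\pi_1 E_{k-1}$, hence to vanish; since $M_k$ is abelian the conjugation action of $\pi_1 E_k$ on $M_k$ factors through $\pi_1 E_{k-1}$ and therefore is trivial, i.e. $M_k\subset\mathrm Z(\pi_1 E_k)$. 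Writing $G=\pi_1 E$ and $P_k=\ker(G\to\pi_1 E_k)$ we get a filtration $G=P_0\supseteq P_1\supseteq\cdots\supseteq P_n=1$ with each $P_{k-1}/P_k$ cyclic, $[G,P_{k-1}]\subseteq P_k$, and with every quotient $G/P_k\cong\pi_1 E_k$ again the fundamental group of a tower of the same type. In particular $G$ is nilpotent of class $\le n$, generated by at most $n$ elements, and of torsion-free rank at most $n$.

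Now I would induct on the number of fibres, the statement being vacuous when $\pi_1 E=1$. For the inductive step let $j$ be the largest index with $F_j=\mathbb S^1$, so $\pi_1 E=\pi_1 E_j$ and we have a central extension $1\to M\to\pi_1 E\to Q\to1$ with $M$ cyclic central and $Q=\pi_1 E_{j-1}$ the fundamental group of the tower $F_1,\dots,F_{j-1}$. By the inductive hypothesis $Q$ contains $\bar\Gamma$ with $[Q:\bar\Gamma]\le\mathrm{Const}(F_1,\dots,F_{j-1})$ and $\mathrm{Tor}(\bar\Gamma)\subset\mathrm Z(\bar\Gamma)$; let $\Gamma'$ be its preimage in $\pi_1 E$, so $[\pi_1 E:\Gamma']=[Q:\bar\Gamma]$ and $M\subset\mathrm Z(\Gamma')$. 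By the Nielsen--Schreier estimate $\bar\Gamma$, and hence $\Gamma'$, is generated by a number of elements bounded in terms of $F_1,\dots,F_n$; this bound will be used at the end. If $M\cong\mathbb Z$ I claim $\Gamma=\Gamma'$ already works: for $x\in\mathrm{Tor}(\Gamma')$ its image lies in $\mathrm{Tor}(\bar\Gamma)\subset\mathrm Z(\bar\Gamma)$, so $y\mapsto[x,y]$ is a homomorphism $\Gamma'\to M$, and $[x,y]^{\operatorname{ord}(x)}=[x^{\operatorname{ord}(x)},y]=1$ forces $[x,y]=1$ because $M$ is torsion-free; thus $x\in\mathrm Z(\Gamma')$.

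The remaining case, $M$ finite, is where passing to a proper subgroup is unavoidable and where the real work lies. Here $\mathrm{Tor}(\Gamma')$ is exactly the preimage of $\mathrm{Tor}(\bar\Gamma)$, the action of $\Gamma'$ on $\mathrm{Tor}(\Gamma')$ by conjugation is trivial modulo $M$ and so is encoded by a biadditive pairing $\bar\Gamma^{\mathrm{ab}}\times\mathrm{Tor}(\bar\Gamma)\to M$; I would take $\Gamma=C_{\Gamma'}(\mathrm{Tor}(\Gamma'))$, which automatically has $\mathrm{Tor}(\Gamma)\subset\mathrm Z(\Gamma)$, with $[\Gamma':\Gamma]$ equal to the order of the image of the associated homomorphism $\rho\colon\bar\Gamma^{\mathrm{ab}}\to\operatorname{Hom}(\mathrm{Tor}(\bar\Gamma),M)$. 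The main obstacle — and the step I expect to be the most delicate — is bounding $|\operatorname{im}\rho|$ in terms of $F_1,\dots,F_n$ and \emph{not} in terms of $|M|$, which is genuinely unbounded (lens-space fibrations already produce arbitrarily large cyclic torsion, central there but not always). The geometric input is that, in the central extension coming from $\mathbb S^1\to E_j\to E_{j-1}$, a commutator $[x,y]$ of lifts of commuting elements of $Q$ is the value of the Euler class $e\in H^2(E_{j-1};\mathbb Z)$ on the homology class of the torus they span, reduced mod $|M|$; and when the image of $x$ in $Q$ has finite order $m$, that torus, traversed $m$ times along one circle, factors through $\mathbb S^2\vee\mathbb S^1$, so $m[\mathbb T^2]$ is spherical and $[x,y]$ has order dividing $m$ in $M$. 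Converting this control — together with the bounded number of generators of $\mathrm{Tor}(\bar\Gamma)$ — into an honest bound on $|\operatorname{im}\rho|$ is the heart of the argument: it must show that the part of $\mathrm{Tor}(\pi_1 E)$ which can fail to be central, and the homomorphisms detecting that failure, have rank controlled by the fibres even though their orders are not.
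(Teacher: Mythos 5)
Your reduction gets two things right that the paper leaves implicit: the central filtration $P_k = \ker(\pi_1 E \to \pi_1 E_k)$ shows that $\pi_1 E$ is itself nilpotent, and your handling of an infinite cyclic central kernel $M$ is correct --- for torsion $x$ the map $y \mapsto [x,y]$ is a homomorphism into $M$ (because $M$ is central), and $[x,y]^{\mathrm{ord}(x)} = [x^{\mathrm{ord}(x)},y]=1$ together with torsion-freeness of $M$ forces $[x,y]=1$. You also correctly identify that all the difficulty is in bounding $|\operatorname{im}\rho|$ uniformly for finite cyclic $M$. But the proposal stops there: the geometric observation that $m\cdot[T^2]$ is spherical when $\bar x$ has order $m$ merely re-derives the algebraic identity $[x,y]^m = [x^m,y]=1$, which is useless since $m$ is itself unbounded, and you offer no mechanism for turning this into a bound independent of $|M|$ and of the orders of the torsion in $\bar\Gamma$.

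The deeper issue is that the inductive hypothesis you chose --- ``torsion is central after a bounded cover'' --- is strictly too weak to run the induction. Knowing that $\bar x$ and $\bar y$ commute in $\bar\Gamma$ gives a torus $h\colon T^2\to E_{j-1}$, but to control the commutator of lifts in an $\mathbb{S}^1$-bundle you need a filling of a \emph{bounded} multiple of that torus, not just its existence as a cycle. The paper carries exactly this extra data through the induction via the notion of a \emph{reliable} manifold (condition (2) in that definition demands that a bounded cover of the witnessing torus bound a solid torus), and the step propagating this condition past a simply connected fiber is where the Dror--Dwyer--Kan theorem and the Technical Lemma come in, producing a cover of degree controlled by $n(F_i)$ and $|\mathrm{Tor}(\pi_2 F_i)|$. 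This is also why dismissing the simply connected fibers as ``changing nothing'' is a misstep rather than a simplification: they leave $\pi_1$ alone but change $\pi_2$, and the constant $\mathrm{Const}(F_1,\dots,F_n)$ depends on them precisely through these invariants. As written, the plan has a genuine gap that cannot be closed without strengthening the inductive hypothesis along the lines of the paper's reliability condition.
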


Our proof is surprisingly involved. 
We would be very interested to see a proof based on a different idea;
in particular, it might help to establish the conjecture in  full generality.

\section*{Motivation}

\subsection{Conjectures} Theorem 1 is motivated by the following conjecture from \cite{KPT}.

\begin{mconj}\label{con:tor}
For all dimensions $m$ there is a constant $C=C(m)$ such that if $M^m$ is an almost nonnegatively curved 
closed smooth $m$-manifold, then there is a nilpotent subgroup $N\subset \pi_1M$ of index  at most $C$ whose torsion is contained in its center.
\end{mconj}

The following earlier conjecture of Fukaya and Yamaguchi \cite{FY} is closely related.

\begin{conj}[Fukaya--Yamaguchi]\label{con:c-ab}
The fundamental group of a nonnegatively curved  $m$-manifold $M$ is $C(m)$-abelian: 
there is $C=C(m)$ such that if $M^m$ is nonnegatively curved, then there is an 
abelian subgroup $A\subset \pi_1M$ of index  at most~$C$.
\end{conj}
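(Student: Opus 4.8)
The plan is to reduce the conjecture, via the Cheeger--Gromoll splitting theorem, to a question about crystallographic groups and a finite ``vertical'' group, and then to control the size of the latter by Riemannian means. First I would pass to the universal cover $\tilde M$, a complete simply connected manifold of nonnegative curvature. Since $M$ is closed, the classical structure theory gives an isometric, $\pi_1M$-equivariant splitting $\tilde M=\mathbb{R}^k\times N$, where $k\le m$ and $N$ is a compact simply connected manifold with no Euclidean de Rham factor, $\mathbb{R}^k$ being the canonical maximal flat factor; consequently $\mathrm{Isom}(\tilde M)=\mathrm{Isom}(\mathbb{R}^k)\times\mathrm{Isom}(N)$, and $\mathrm{Isom}(N)$ is a compact Lie group. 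Let $\rho\colon\pi_1M\to\mathrm{Isom}(\mathbb{R}^k)$ and $\sigma\colon\pi_1M\to\mathrm{Isom}(N)$ be the two coordinate projections. A routine properness argument (using that $N$ and $\mathrm{Isom}(N)$ are compact while $\pi_1M$ acts freely and properly discontinuously) shows that $\Delta:=\ker\rho$ is finite and that $\rho(\pi_1M)$ is a discrete cocompact, hence crystallographic, subgroup of $\mathrm{Isom}(\mathbb{R}^k)$.

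Next I would apply Bieberbach's theorems: the crystallographic group $\rho(\pi_1M)$ contains a normal translation lattice $\Lambda\cong\mathbb{Z}^k$ whose index equals the order of the point group and is therefore bounded by a dimensional constant $w(k)\le w(m)$ (Jordan--Zassenhaus/Minkowski bound on finite subgroups of $\mathrm{GL}_k(\mathbb{Z})$). Then $G:=\rho^{-1}(\Lambda)\trianglelefteq\pi_1M$ has index $\le w(m)$ and fits in $1\to\Delta\to G\to\mathbb{Z}^k\to1$ with $\Delta$ finite. Passing to the centralizer $C_G(\Delta)$ (finite index in $G$) makes $\Delta$ central; since every commutator in $G$ already lies in $\Delta$, the induced commutator map is an alternating bilinear form $\mathbb{Z}^k\times\mathbb{Z}^k\to Z(\Delta)$, and if $d$ denotes the exponent of $Z(\Delta)$, the preimage of $d\,\mathbb{Z}^k$ is an abelian subgroup $A\subset\pi_1M$. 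This already proves $\pi_1M$ is virtually abelian, but only with $[\pi_1M:A]\le w(m)\cdot|\mathrm{Aut}\,\Delta|\cdot d^{\,k}$, a bound that still involves $|\Delta|$.

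The hard part, therefore, is to bound $|\Delta|$ --- equivalently, the order of the finite normal ``vertical holonomy'' group --- by a constant depending only on $m$. No purely group-theoretic bound is possible: the abstract finite-by-$\mathbb{Z}^{k}$ groups obtained as central extensions of $\mathbb{Z}^2$ by $\mathbb{Z}/n$ (integer Heisenberg groups modulo $n$-th powers of the center) have minimal abelian index $\ge n$, so genuine nonnegative-curvature input is indispensable. The route I would pursue is to run the whole argument equivariantly: study the compact Lie group $K=\overline{\sigma(\pi_1M)}\subset\mathrm{Isom}(N)$ acting isometrically on $N$ together with the finite subgroup $\Delta\subset K$, exploiting that conjugation by $\Lambda$ fixes $\Delta$ (pointwise up to bounded index) and that $\pi_1M$ acts freely on $\mathbb{R}^k\times N$, and then bring in the collapsing/fibration theory of Cheeger--Fukaya--Gromov --- applied, say, to the orthonormal frame bundle of $M$, on which the curvature is two-sidedly bounded and the geometry is close to a tower of fiber bundles as in the hypothesis of Theorem~\ref{thm:smooth}. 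Theorem~\ref{thm:smooth} would supply a nilpotent subgroup of dimensionally bounded index whose torsion is central, and the final step would be to upgrade ``nilpotent with central torsion'' to ``abelian'': in the honestly (as opposed to almost) nonnegatively curved situation the torsion-free nilpotent part must already be abelian, since $\mathbb{R}^k\times N$ carries no nilpotent collapsing directions.

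The step I expect to be the genuine obstacle is precisely this last synthesis --- passing from the dimensional control provided by the frame-bundle/tower picture back to a dimensional bound on $|\Delta|$ and on the commutator form on $\mathbb{Z}^k$, and simultaneously ruling out nonabelian torsion-free nilpotent behaviour. This is exactly the point where the structure theory of isometric actions, the collapsing theory, and a topological input of the type of Theorem~\ref{thm:smooth} must be combined quantitatively, and it is the reason the Fukaya--Yamaguchi conjecture has so far resisted a complete proof.
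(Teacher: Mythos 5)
First, a point of framing: this statement is the Fukaya--Yamaguchi \emph{conjecture}, and the paper does not claim an unconditional proof of it. What the paper presents is an argument that Conjecture~\ref{con:tor} (together with Theorem~B of \cite{KPT}, Wilking's \cite[Cor.~6.3]{wilking}, and the third Bieberbach theorem) implies Conjecture~\ref{con:c-ab}. Your proposal does not quite prove the conjecture either, as you honestly acknowledge, so the useful comparison is between your reduction strategy and the paper's.

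Your decomposition $\tilde M=\mathbb{R}^k\times N$ with $\rho,\sigma$ the two isometric projections, finiteness of $\Delta=\ker\rho$, and Bieberbach applied to $\rho(\pi_1M)$ all match the beginning of the paper's argument. Where you diverge is that you then try to analyze the extension $1\to\Delta\to G\to\mathbb{Z}^k\to 1$ directly, which leads you straight into the problem you correctly flag: the abelian index of such an extension cannot be bounded group-theoretically without controlling $|\Delta|$ (and the commutator pairing $\mathbb{Z}^k\times\mathbb{Z}^k\to Z(\Delta)$), and you give the Heisenberg-mod-$n$ example to justify this. That diagnosis is right. The paper, however, never runs into this extension problem because it first invokes Wilking's \cite[Cor.~6.3]{wilking}: one can \emph{deform the metric} on $M$ keeping the universal cover isometrically split as $\mathbb{R}^n\times K$, but arranging that the \emph{entire} induced action on $K$ is finite --- i.e.\ so that $\Gamma_2:=\sigma(\pi_1M)$ is finite, not merely $\Delta$. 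Then $\pi_1M$ embeds into the honest direct product $\Gamma_1\times\Gamma_2$, and one only needs bounded-index abelian subgroups of each factor separately; the commutator-form difficulty simply evaporates, and no bound on $|\Gamma_2|$ itself is required.

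The second ingredient you half-sense but do not land on correctly: the finite group is treated by Cheeger's trick applied to the orthonormal frame bundle \emph{of the compact factor} $K$ (not of $M$), on which $\Gamma_2$ acts freely because an isometry fixing a full frame is the identity; $F(K)/\Gamma_2$ then carries almost nonnegatively curved metrics and has finite fundamental group, so Conjecture~\ref{con:tor} (plus Theorem~B of \cite{KPT}) gives the bounded abelian index in $\Gamma_2$. Your version proposes the frame bundle of $M$ together with Theorem~\ref{thm:smooth}, but Theorem~\ref{thm:smooth} requires the explicit tower-of-bundles hypothesis with fibers that are circles or simply connected, and it is not established in the paper (this is exactly the ``rescaled smoothness'' discussion) that a collapsing frame bundle of an arbitrary $M$ has that structure; so that substitution is not available. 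In short: you have the right splitting and the right diagnosis of the obstacle, but the two ideas you need are (i) Wilking's metric deformation to replace the extension by a direct product, and (ii) the frame-bundle/Cheeger-trick argument applied to $\Gamma_2\curvearrowright F(K)$, invoking Conjecture~\ref{con:tor} rather than the narrower Theorem~\ref{thm:smooth}.
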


Let us show that our main conjecture (if true) implies Conjecture \ref{con:c-ab}.

If the fundamental group of $M$ is finite (in particular if $M$ is positively curved), then the whole group is torsion.
In this case Conjecture~\ref{con:tor} and Theorem B in \cite{KPT} imply Conjecture~\ref{con:c-ab}.
The general case is more tricky.
The following argument
was suggested to us by Burkhard Wilking (compare to \cite[Corollary 4.6.1]{KPT}).

If $\sec(M^m)\ge0$, then the universal cover $\tilde M$ of $M$ is isometric to the product $\mathbb{R}^n\times K$, where $K$ is a compact Riemannian manifold and the $\pi_1M$ action  on $\mathbb{R}^n\times K$ is diagonal~\cite{CG72}.
It follows from \cite[Cor. 6.3]{wilking} that one can deform the metric on $M$ so that its universal cover is still isometric to $\mathbb{R}^n\times K$ and the induced action on $K$ is finite.
Let $\Gamma_1$ be the image of $\pi_1M$ in $\Iso(\R^n)$ and $\Gamma_2$ be the image in $\Iso(K)$. By the above we can assume that $\Gamma_2$ is finite.
Therefore $\Gamma_1$ is a crystallographic group.
By the third Bieberbach theorem, it contains a subgroup of index $\le C(n)$ which consists entirely of translations (and hence is abelian).
Consider the action of $\Gamma_2$ on the frame bundle $F$ of $K$.
This action is free and by Cheeger's trick, $F$ admits a sequence of almost nonnegatively curved metrics~\cite{FY}. 
Since $\Gamma_2$ is finite, by Conjecture~\ref{con:tor} it contains 
an abelian subgroup of index $\le C(m)$. 
Thus, both $\Gamma_1$ and $\Gamma_2$ are $C(m)$-abelian,
and this yields Conjecture~\ref{con:c-ab}.

\subsection{About the reduction.} Let us explain  how Theorem~\ref{thm:smooth} is related to Conjecture~\ref{con:tor}.

An almost nonnegatively curved manifold can be defined as a closed smooth manifold $M$ which admits a sequence of metrics $g_n$ with a uniform lower curvature bound such that the sequence $M_n$ converges to a point in Gromov--Hausdorff topology.

One approach to studying almost nonnegatively curved manifolds is by studying successive blow-ups of the collapsing sequence $M_n=(M,g_n)$, as was done in \cite[Section 4.3]{KPT}.
Let us describe this construction.

The sequence $\{M_n\}$ converges to a point; this one-point space will be denoted by $A_0$.

Set $M_{n,1}:=M_n$ and $\vartheta_{n,1}:=\diam M_{n,1}$.
Rescale now $M_{n,1}$ by $\tfrac{1}{\vartheta_{n,1}}$
so that 
\[\diam(\tfrac{1}{\vartheta_{n,1}}\cdot M_{n,1})=1.\]
Passing to a subsequence if necessary, one has
that the manifolds $\frac{1}{\vartheta_{n,1}}{\cdot} M_{n,1}$
converge to $A_1$,
where $A_1$
is a compact nonnegatively curved Alexandrov space with diameter $1$.

Now choose a regular point $\bar p_1\in A_1$,
and consider distance coordinates around $\bar p_1\in U_1\to \mathbb{R}^{k_1}$,
where $k_1$ is the dimension of $A_1$.
The distance functions can be lifted
to $U_{n,1}\subset \frac{1}{\vartheta_{n,1}}{\cdot} M_{n,1}$.
Denote by $M_{n,2}$ the level set of $U_{n,1}\to \mathbb{R}^{k_1}$ that corresponds to $\bar p_1$.
Clearly, $M_{n,2}$ is a compact submanifold of codimension $k_1$.
Set $\vartheta_{n,2}:=\diam M_{n,2}$.
Passing again to a subsequence, one has
that the sequence $\frac{1}{\vartheta_{n,2}}{\cdot} M_{n,2}$
converges to some  Alexandrov space $A_2$.

As before, $A_2$ is a compact nonnegatively curved Alexandrov space with diameter 1.
Set $k_2:=k_1+\dim A_2$.
If one now chooses a marked point in $M_{n,2}$,
then, as $n\to\infty$,
$\frac{1}{\vartheta_{n,2}}{\cdot} M_{n}$ converges 
in the pointed Gromov--Hausdorff topology
to $A_2\times\mathbb{R}^{k_1}$,
which is of dimension $k_2$.

We repeat this procedure until, at some step, $k_\ell=m$, where $m=\dim M$.

\medskip

As a result, we obtain a sequence $\{A_i\}$ of compact nonnegatively curved Alexandrov spaces
with diameter $1$ that satisfies
$$\dim A_i=k_i-k_{i-1} 
\quad\text{ and therefore }\quad
\sum_{i=1}^\ell\dim A_i=m.$$
We also obtain
a sequence of rescaling factors $\vartheta_{n,i}=\diam M_{n,i}$,
and  a nested sequence of submanifolds
$$\{p_n\}=M_{n,\ell}\subset \cdots \subset M_{n,2}\subset M_{n,1}=M_n,$$

Now let us assume that \emph{for all choices of $\bar p_i$}
the Alexandrov spaces $A_1,\dots, A_\ell$ are closed  Riemannian manifolds.
This condition will be called the \emph{rescaled smoothness assumption}.

Notice that his assumption definitely \emph{does not need to hold} in general;
it is only made to simplify the problem.
Foremost, it makes it possible to apply the following fibration theorem of Yamaguchi~\cite{Yam} to the successive blow-ups of the collapsing sequence under consideration.

\begin{thm}[Yamaguchi's Fibration Theorem]
Let $M_n$ be a sequence of $m$-dimensional Riemannian manifolds with sectional curvature at least $k$ 
and diameters at most $D$, which converges to a Riemannian manifold $N$ in the Gromov--Hausdorff sense.
Then for all large $n$ there exists an almost Riemannian submersion $f_n\co M_n\to N$.
In particular, $f_n$ is a fiber bundle.
\end{thm}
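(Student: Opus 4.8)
The plan is to construct $f_n$ out of distance functions on $M_n$, following Yamaguchi. Write $q=\dim N$. Since $N$ is a fixed closed Riemannian manifold, fix $r_0>0$ smaller than its convexity radius together with a finite cover of $N$ by balls $B(p_\alpha,r_0)$, $\alpha=1,\dots,L$; on each $B(p_\alpha,2r_0)$ choose points $a_{\alpha,1},\dots,a_{\alpha,q}\in N$ whose distance functions $\rho_{\alpha,j}=d(a_{\alpha,j},\cdot\,)$ are the components of a nondegenerate strainer, so that $\Phi_\alpha=(\rho_{\alpha,1},\dots,\rho_{\alpha,q})$ restricts to a bi-Lipschitz chart on $B(p_\alpha,2r_0)$ with almost-orthonormal gradients. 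Such strainers exist near every point of a Riemannian manifold because almost-antipodal geodesic pairs are available. Finally fix $\varepsilon_n$-Gromov--Hausdorff approximations $\varphi_n\colon M_n\to N$ and $\psi_n\colon N\to M_n$ with $\varepsilon_n\to0$.

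First I would \emph{lift the strainers}: set $\tilde a_{\alpha,j}=\psi_n(a_{\alpha,j})\in M_n$ and $\tilde\rho_{\alpha,j}=d(\tilde a_{\alpha,j},\cdot\,)\colon M_n\to\mathbb{R}$. Since $\psi_n$ nearly preserves distances and $\sec M_n\ge k$, Toponogov's comparison converts the lower curvature bound into lower bounds on the comparison angles at the $\tilde a_{\alpha,j}$; together with the strainer inequalities on $N$ and the near-isometry this pins those angles near $\pi/2$, and hence shows that, over $\varphi_n^{-1}(B(p_\alpha,2r_0))$, the $\tilde\rho_{\alpha,j}$ are uniformly $C^0$-close to $\rho_{\alpha,j}\circ\varphi_n$ and that their almost-gradients are almost orthonormal. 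Next, \emph{smooth}: the lower curvature bound makes each $\tilde\rho_{\alpha,j}$ uniformly semiconcave on the region in question, so averaging it over metric balls of radius $\eta_n$, chosen with $\varepsilon_n\ll\eta_n\to0$, produces a $C^\infty$ function $f^n_{\alpha,j}$ that stays uniformly $C^0$-close to $\tilde\rho_{\alpha,j}$ and now satisfies honest gradient estimates $|\langle\nabla f^n_{\alpha,i},\nabla f^n_{\alpha,j}\rangle-\delta_{ij}|<\tau_n$ with $\tau_n\to0$.

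Then I would \emph{glue}. Composing $F^n_\alpha=(f^n_{\alpha,1},\dots,f^n_{\alpha,q})$ with the inverse of the chart $\Phi_\alpha$ gives a local map $g^n_\alpha\colon\varphi_n^{-1}(B(p_\alpha,2r_0))\to B(p_\alpha,2r_0)\subset N$ that is a $\tau'_n$-almost Riemannian submersion. Pull back a partition of unity $\{\chi_\alpha\}$ on $N$ subordinate to $\{B(p_\alpha,r_0)\}$ via $\varphi_n$, and define $f_n(x)\in N$ to be the Riemannian center of mass of the points $g^n_\alpha(x)$ with weights $\chi_\alpha(\varphi_n(x))$. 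This is well defined because for each $x$ the active points $g^n_\alpha(x)$ all lie within $O(r_0)$ of $\varphi_n(x)$, inside a convex ball of $N$; Karcher's estimates for the center of mass then show $f_n$ is smooth and still a $\tau''_n$-almost Riemannian submersion with $\tau''_n\to0$. For $n$ large $\tau''_n$ is small enough that $df_n$ has rank $q$ everywhere, so $f_n$ is a submersion; $M_n$ is compact, hence $f_n$ is proper, its image is open and closed in the connected manifold $N$ and therefore equals $N$, and Ehresmann's fibration theorem makes $f_n$ a locally trivial fiber bundle.

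The main obstacle is the interplay of the middle two steps: the smoothing scale $\eta_n$ must be small enough that $f^n_{\alpha,j}$ still tracks the distance function yet large compared with $\varepsilon_n$ so that the averaging produces genuine gradient bounds, and the whole estimate rests on upgrading the one-sided bound $\sec M_n\ge k$ to quantitative first- and second-variation control via Toponogov comparison — this is exactly where a lower bound, rather than a two-sided bound, suffices. The gluing step is where smoothness of $N$ is indispensable, since it supplies a uniform convexity radius on which the center-of-mass construction and its derivative estimate make sense; once that is in place the fiber bundle conclusion is a soft consequence of Ehresmann's theorem.
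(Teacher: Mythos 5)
The paper does not prove this theorem at all; it is quoted as a black box from Yamaguchi's paper \cite{Yam} and used as an ingredient in the heuristic reduction of Conjecture~\ref{con:tor} to Theorem~\ref{thm:smooth}. There is therefore no in-paper proof to compare against, and you should treat this as a citation rather than a claim requiring proof within the article.

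Taken on its own terms, your sketch follows the correct general strategy of Yamaguchi and of Cheeger--Fukaya--Gromov: lift strainers from the smooth limit $N$, turn the lifted distance functions into local almost-Riemannian submersions, patch them with a center-of-mass construction inside the convexity radius of $N$, and finish with Ehresmann. But the central step---producing the smoothed functions $f^n_{\alpha,j}$ with $|\langle\nabla f^n_{\alpha,i},\nabla f^n_{\alpha,j}\rangle-\delta_{ij}|<\tau_n$---is asserted, not argued, and it is exactly where the entire difficulty of the theorem resides. Two concrete issues. First, averaging a Lipschitz function over metric balls only produces a Lipschitz function, not a $C^\infty$ one; one has to mollify against a smooth kernel in a way adapted to the (non-collapsed directions of the) manifold, and even then the regularity one immediately gets is $C^{1}$ with a one-sided Hessian bound, which happens to suffice for Ehresmann but is not what you wrote. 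Second, and more seriously, passing from the Toponogov first-variation angle estimates (which hold at each point for actual geodesics) to a uniform estimate on the gradient of a mollified function requires showing that the almost-orthogonality of the strainer directions persists at scale $\eta_n$ uniformly over the region in question; this is a genuine quantitative argument involving the lower curvature bound and the Gromov--Hausdorff closeness, and your outline names the obstacle without overcoming it. One should also note explicitly that compactness of $N$ (hence a positive convexity radius) is not a hypothesis but a consequence of the uniform diameter bound and the convergence, so your use of the convexity radius is legitimate but deserves a sentence of justification.
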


Let $M_n$ as above satisfies the rescaled smoothness assumption.
Then it should be true that after passing to a subsequence of $M_n$,
each $M_n$ is homeomorphic to the total space of a tower of fiber bundles
\[
M_n=E_\ell\buildrel {A_\ell}\over \longrightarrow E_{n-1}\buildrel {A_{\ell-1}}\over\longrightarrow\dots\buildrel {A_1}\over\longrightarrow E_0=\{pt\}.
\]
Moreover, by using the Lipschitz properties of the gradient flow,
it should be possible to pass to a finite cover of the total space with number of leafs 
controlled by the topology of $A_i$, so that the tower of bundles can be
further refined  to a tower of the form
\[
E=E_n\buildrel {F_n}\over \longrightarrow E_{n-1}\buildrel {F_{n-1}}\over\longrightarrow\dots\buildrel {F_1}\over\longrightarrow E_0=\{pt\}
\]
such that
\begin{enumerate}[(i)]
\item Each $F_i$ is either $\mathbb{S}^{1}$ or is simply connected. 
\item Each of the bundles $E_k\buildrel {F_k}\over \longrightarrow E_{k-1}$ is homotopically trivial over the $1$-skeleton. 
\end{enumerate}

This finally leads us to the main topological result of this paper (Theorem \ref{thm:smooth} above),
which says that if such a tower of bundles exists, then its total space satisfies the conclusion of Conjecture~\ref{con:tor}.

We have, however, not worked out the details of the reduction, 
as we do not see a geometric condition which would imply the rescaled smoothness assumption.
In fact, it might  well be possible to do the reduction even in the case if all $A_i$ are general  Alexandrov spaces  \emph{without  boundary}, but we have no idea what to do if at least one of them has nonempty boundary.

\section*{Proof of the theorem}

\subsection{Technical tools}
The proof of Theorem~\ref{thm:smooth} uses the following result of Dror,  Dwyer and Kan (\cite{DDK}).

\begin{thm}\label{thm:DDK}
Let $F$ be a finite simply connected CW complex and 
$Aut_0(F)$ be the identity component  of the 
space of all homotopy equivalences $F\to F$. Then $Aut_0(F)$ is homotopy equivalent to a CW complex with a 
finite number of cells in each dimension.
\end{thm}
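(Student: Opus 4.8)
The statement to prove is Theorem~\ref{thm:DDK}: for a finite simply connected CW complex $F$, the identity component $\mathrm{Aut}_0(F)$ of the monoid of self-homotopy-equivalences has the homotopy type of a CW complex with finitely many cells in each dimension. The strategy I would follow is the one of Dror--Dwyer--Kan: realize $\mathrm{Aut}_0(F)$ as a space that can be built up through a finite Postnikov-type tower (or a finite ``resolution'' by fibrations) whose layers are accessible to classical finiteness results. Concretely, the key invariants to control are the homotopy groups $\pi_i(\mathrm{Aut}_0(F))$ (equivalently, $\pi_i(\mathrm{Aut}_0(F),\mathrm{id})$), and one shows first that these are finitely generated abelian groups, then that they vanish in sufficiently high degrees, and finally that finite generation of all homotopy groups plus a uniform bound on the dimension (or a suitable nilpotence/finiteness statement) forces the CW model to have finitely many cells in each dimension.

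\textbf{Step 1: Identify the homotopy groups of $\mathrm{Aut}_0(F)$.} I would use the evaluation fibration $\mathrm{map}(F,F)_{\mathrm{id}} \to F$ (evaluation at a basepoint) with fiber $\mathrm{map}_*(F,F)_{\mathrm{id}}$, and note that $\mathrm{Aut}_0(F)$ is a union of path components of $\mathrm{map}(F,F)$, so its homotopy groups based at $\mathrm{id}$ agree with those of $\mathrm{map}(F,F)_{\mathrm{id}}$. Thus $\pi_i \mathrm{Aut}_0(F) = \pi_i(\mathrm{map}(F,F)_{\mathrm{id}})$ for $i\ge 1$. Since $F$ is a finite complex and simply connected, the mapping space $\mathrm{map}(F,F)$ can be analyzed via the skeletal filtration of the source $F$: one obtains a finite tower of fibrations $\mathrm{map}(F^{(k)},F) \to \mathrm{map}(F^{(k-1)},F)$ whose fibers are (products of) iterated loop spaces $\Omega^k F$ indexed by the $k$-cells of $F$. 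Because $F$ is simply connected and finite, each $\pi_j \Omega^k F = \pi_{j+k} F$ is finitely generated (Serre's theorem), and there are finitely many cells, so an inductive argument up the finite tower shows every $\pi_i(\mathrm{map}(F,F)_{\mathrm{id}})$ is finitely generated abelian. This already gives finite generation of $\pi_i\mathrm{Aut}_0(F)$ for all $i$.

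\textbf{Step 2: From finitely generated homotopy groups to a CW model with finitely many cells per dimension.} Finite generation of all homotopy groups alone is not quite enough for a CW complex with finitely many cells in each dimension (one must also control how they are glued), so here I would invoke the standard homological criterion: a simple (or nilpotent) space $X$ with $\pi_i X$ finitely generated for all $i$ has $H_i(X;\mathbb{Z})$ finitely generated for all $i$, and conversely a space with finitely generated homology in each degree admits a CW model with finitely many cells in each dimension (build the minimal cell structure degree by degree, attaching finitely many cells at each stage to kill and create the needed homology). The remaining point is that $\mathrm{map}(F,F)_{\mathrm{id}}$ is an $H$-space (composition gives it a homotopy-associative multiplication with $\mathrm{id}$ as unit), hence in particular \emph{simple}: $\pi_1$ acts trivially on all higher $\pi_i$. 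So the passage ``f.g.\ homotopy $\Rightarrow$ f.g.\ homology'' applies verbatim via the Postnikov tower, whose layers are Eilenberg--MacLane spaces $K(\pi_i,i)$ with $\pi_i$ finitely generated, each of which has finitely generated homology in every degree; the Serre spectral sequence of each stage then propagates finite generation of homology up the (infinite but convergent-in-each-degree) Postnikov tower. I would then quote (or reprove) the fact that a simple space with finitely generated integral homology in each dimension is homotopy equivalent to a CW complex with finitely many cells in each dimension.

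\textbf{Main obstacle.} The delicate part is \emph{Step 1's organization}: one must be careful that ``identity component of the space of homotopy equivalences'' is a well-behaved object --- it is an open and closed submonoid of $\mathrm{map}(F,F)$, and one needs that taking a single path component does not disturb the finiteness arguments, which is fine since homotopy groups are local to a component. The real technical heart, though, is ensuring the inductive climb up the skeletal tower of the \emph{source} $F$ stays within the component of $\mathrm{id}$ and that the fibers are genuinely built from $\Omega^k F$ with finitely generated homotopy --- this is where simple connectivity and finiteness of $F$ are both essential (simple connectivity makes the higher mapping-space analysis unobstructed by $\pi_1$-actions, and finiteness makes the tower finite and Serre's finiteness theorem applicable). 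Once finite generation of $\pi_* \mathrm{Aut}_0(F)$ is in hand, Step 2 is essentially the classical fact that $H$-spaces (or simple spaces) with finitely generated homotopy have a CW model with finitely many cells in each dimension, so I would cite that rather than grind through it. I expect the bookkeeping in Step 1 --- tracking basepoints, components, and the finiteness of the fibers in the skeletal tower --- to be the main thing requiring care.
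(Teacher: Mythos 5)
Note first that the paper does not actually prove this theorem: it is simply quoted from Dror, Dwyer and Kan \cite{DDK}, so there is no proof in the paper against which to compare your sketch. That said, your outline is essentially the standard argument in the simply connected case and is correct. In Step~1, filtering by the skeleta of the \emph{source} gives a finite tower of fibrations $\mathrm{map}(F^{(k)},F)\to\mathrm{map}(F^{(k-1)},F)$ whose fibers have homotopy groups assembled from $\pi_*F$; since $F$ is a simply connected finite complex, $\pi_*F$ is finitely generated in each degree by Serre's theorem, and the finitely many long exact sequences propagate finite generation to every $\pi_i\bigl(\mathrm{map}(F,F)_{\mathrm{id}}\bigr)=\pi_i\,\mathrm{Aut}_0(F)$. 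In Step~2, $\mathrm{Aut}_0(F)$ is a connected $H$-space, hence simple with abelian $\pi_1$; the Postnikov-tower argument then gives finitely generated integral homology in each degree for it and for its universal cover, $\pi_1$ is finitely generated abelian hence finitely presented, and any $\mathbb{Z}[\pi_1]$-module that is finitely generated over $\mathbb{Z}$ is finitely generated over $\mathbb{Z}[\pi_1]$, so Wall's finite-type criterion applies and produces a CW model with finitely many cells in each dimension.

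One sentence in your preliminary plan is wrong and should be dropped: the homotopy groups of $\mathrm{Aut}_0(F)$ do \emph{not} vanish in sufficiently high degrees. The evaluation fibration $\mathrm{map}_*(F,F)_{\mathrm{id}}\to\mathrm{Aut}_0(F)\to F$ feeds copies of $\pi_{*+k}F$ into $\pi_*\mathrm{Aut}_0(F)$, and by Serre a noncontractible simply connected finite complex has nonzero homotopy in infinitely many degrees. Fortunately your actual argument in Step~2 never invokes such vanishing; it uses only finite generation together with the $H$-space structure and Wall's theorem, so this is a slip in the advertising rather than in the proof. For context, the result of \cite{DDK} is stronger -- it treats virtually nilpotent $F$, not merely simply connected $F$ -- and their proof uses correspondingly heavier machinery; the direct skeletal-tower argument you give is the natural and adequate route for the simply connected case that the paper needs.
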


Let $\star=\id_F$ be the base point of $\Aut_0(F)$.

Furthermore, we consider $\mathbb{S}^1$ as the unit circle in the complex plane, so  that $z\mapsto z^n$ defines a map $\mathbb{S}^1\to \mathbb{S}^1$ of degree $n$.

\begin{cor}\label{cor:DDK}

Given a finite simply connected CW complex $F$, there is a natural number $n=n(F)$ such that the following  holds.

Assume that 
$\{\kappa_z\colon F\to F: z\in \mathbb{S}^1\}$ is a one parameter family of homotopy equivalences with $\kappa_1=\id_F$
such that for some $N\in\mathbb{N}$ the family $\kappa^N_z=\kappa_{z^N}$ is homotopic to the constant family $\mathrm{id}_F$. 
Then $\kappa^n_z$ is homotopic to the constant family $\mathrm{id}_F$.

\end{cor}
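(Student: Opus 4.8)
The plan is to translate the statement about one-parameter families of homotopy equivalences into a statement about the homotopy groups of $\Aut_0(F)$, and then apply Theorem~\ref{thm:DDK}. A one-parameter family $\{\kappa_z : z \in \mathbb{S}^1\}$ with $\kappa_1 = \id_F$ is precisely a loop in $\Aut_0(F)$ based at $\star = \id_F$, hence represents an element $[\kappa] \in \pi_1(\Aut_0(F), \star)$. The reparametrization $z \mapsto z^N$ on $\mathbb{S}^1$ has degree $N$, so the family $\kappa^N_z = \kappa_{z^N}$ represents $N\cdot[\kappa]$ in $\pi_1(\Aut_0(F), \star)$; saying it is homotopic (through families with appropriate endpoint behavior) to the constant family $\id_F$ says exactly that $N\cdot[\kappa] = 0$. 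Thus $[\kappa]$ is a torsion element of $\pi_1(\Aut_0(F), \star)$, and what we must produce is a single $n = n(F)$ that annihilates every torsion element, i.e. such that $n\cdot x = 0$ for all $x \in \mathrm{Tor}(\pi_1(\Aut_0(F),\star))$; then $\kappa^n_z$ represents $n\cdot[\kappa] = 0$ and is nullhomotopic as a family.

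So the crux is: \emph{the torsion subgroup of $\pi_1(\Aut_0(F), \star)$ has finite exponent.} This is where Theorem~\ref{thm:DDK} enters. Since $\Aut_0(F)$ is homotopy equivalent to a CW complex with finitely many cells in each dimension, in particular its $2$-skeleton is finite, so $\pi_1(\Aut_0(F), \star)$ is a \emph{finitely presented} group. A finitely generated group need not have finite-exponent torsion in general, but here more is true: one should use that $\Aut_0(F)$, being a connected component of a topological monoid (a grouplike $H$-space, as the set of self-homotopy-equivalences of $F$ inherits composition), has an \emph{abelian} fundamental group — indeed the Eckmann–Hilton argument applies because $\pi_1$ of an $H$-space is abelian. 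A finitely generated abelian group is a direct sum of a free part and a finite torsion part, so its torsion subgroup is finite, hence of finite exponent. Set $n(F)$ to be (a multiple of) that exponent; $n(F)$ depends only on $F$, as required.

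I would organize the write-up as: (1) identify one-parameter families with $\kappa_1 = \id_F$ as loops in $\Aut_0(F)$ at $\star$, and identify the reparametrized family $\kappa^N_z = \kappa_{z^N}$ with the $N$-th power of the loop class; (2) note that $\Aut_0(F)$ is a connected $H$-space, so $G := \pi_1(\Aut_0(F), \star)$ is abelian; (3) invoke Theorem~\ref{thm:DDK} to get that $\Aut_0(F)$ has the homotopy type of a CW complex with finite $2$-skeleton, hence $G$ is finitely generated; (4) conclude $\mathrm{Tor}(G)$ is finite, let $n(F)$ be its exponent; (5) given $\kappa$ with $N\cdot[\kappa] = 0$, deduce $[\kappa] \in \mathrm{Tor}(G)$, so $n\cdot[\kappa] = 0$, meaning $\kappa^n_z$ is homotopic to the constant family $\id_F$.

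The main obstacle is really a matter of care rather than depth: making sure the homotopies in the hypothesis and conclusion are interpreted as based loop homotopies in $\Aut_0(F)$ (keeping track of the fixed basepoint $\kappa_1 = \id_F$, and that "homotopic to the constant family" means homotopic rel the condition at $z = 1$), and confirming that $\Aut_0(F)$ genuinely is an $H$-space so that $\pi_1$ is abelian — composition of homotopy equivalences is associative with unit $\id_F$ up to homotopy, and restricts to the identity component, so this is standard. One could alternatively avoid the $H$-space remark and argue directly that a finite $2$-skeleton plus the algebraic structure coming from composition forces finite-exponent torsion, but the $H$-space route is cleanest.
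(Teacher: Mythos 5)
Your argument is correct and matches the paper's proof essentially line for line: view $\kappa$ as a loop in $\Aut_0(F)$, use the $H$-space structure to get that $\pi_1(\Aut_0(F))$ is abelian, invoke Theorem~\ref{thm:DDK} for finite generation, and take $n(F)$ to be the exponent of the (finite) torsion subgroup. The only difference is that you spell out more explicitly the translation of $\kappa^N_z=\kappa_{z^N}$ into $N\cdot[\kappa]$, which the paper leaves implicit.
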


\begin{proof}[Proof.]
We can view $\kappa$ as a loop in $\Aut_0(F)$ based at $\star$.
Since $\Aut_0(F)$ is an $H$-space, its fundamental group  
$\pi_1(E_0(F))$ is abelian. 

By Theorem~\ref{thm:DDK}, $\pi_1(\Aut_0(F))$ is finitely generated,
therefore $\Tor[\pi_1(\Aut_0(F))]$ is finite.
Hence the corollary follows.
\end{proof}

We denote by $\DD$ the closed unit disc in the complex plane.

\begin{techlem}\label{techlem}
Let $F$ be a compact simply connected manifold and
$h\colon \DD\z\times F\z\to F$ be  a
$\DD$-family of homotopy equivalences $F\to F$ such that $h(u,*)=\mathrm{id}_F$ for any
$u\in\partial \DD$. 
Consider the map $\beta_x\colon \DD\to F$ defined by $\beta_x\colon u\mapsto h(u,x)$ for a fixed point $x\in F$. 
Then $[\beta_x]\in \mathrm{Tor}(\pi_2F)$ and is independent of $x$.

Moreover, given a map $f\colon \DD\to F$ let $f_h\co \DD\to F$ be given by $f_h(u)=h(u,f(u))$.
Note that  $f_h|_{\partial \DD}=f|_{\partial \DD}$. In particular, we can view $[f_h]-[f]$ as an element of $\pi_2F$.

Then 
$[f_h]-[f]=[\beta_x]\in \mathrm{Tor}(\pi_2F)$. 
\end{techlem}

\begin{proof}[Proof.] Assume the contrary. 

The map $h$ induces a map $\mathbb{S}^2\times F\to F$ which with a  slight abuse of notation we'll still denote by $h$. 
Consider the induced map on cohomology
\[H^{*+2}(F)\overset{h^*}{\longrightarrow} H^{*+2}(\mathbb{S}^2\times F)\to H^*(F)\otimes H^2(\mathbb{S}^2)\cong  H^*(F),\]
where the second map is the projection coming from the K\"unneth isomorphism.
We'll denote the resulting map $H^{*+2}(F)\to H^*(F)$ by $D_h$. It's easy to see that it's an algebra derivation on $H^*(F)$. The same obviously holds true for $D_h^\QQ\colon H^{*+2}(F,\QQ)\to H^*(F,\QQ)$

 We claim that  $D_h^\QQ$ vanishes on $H^2(F,\QQ)$. Suppose not and and that
  for some 
$\theta\in H^2(F,\QQ)$ we have $H^0(F,\QQ)\ni D_h^\QQ\theta\not=0$. 
Note that \[D_h^\QQ\theta^k=k{\cdot} \theta^{k-1}{\cdot} D_h^\QQ\theta.\]

Therefore 
\[[\theta^{k-1}]\ne0
\quad
\Longrightarrow
\quad[\theta^{k}]\ne0.\]
Thus,  $[\theta^{k}]\not=0$ for any positive integer $k$.
In particular, $F$ has infinite dimension, a contradiction. 
Thus, $D^\QQ_h|_{H^2(F,\QQ)}\equiv 0$. Hence, $D_h|_{H^2(F)}\equiv 0$ and therefore $h^*=pr_2^*$ on $H^2(F)$ where $pr_2\colon \mathbb{S}^2\times F\to F$ is the second coordinate projection. 
This  easily implies the statement of the Lemma.
\end{proof}
\begin{rmk}
Since $[\beta_x] \in \pi_2F$ is independent of the choice of $x$, we can drop the subindex $x$ and simply denote this element by $[\beta]$.
\end{rmk} 

\subsection{Two claims}

\begin{claim}
Let $\mathbb{S}^1\to E\to B$ be an oriented $\mathbb{S}^1$-bundle over a compact manifold $B$.
Assume that the action of $\pi_1B$ 
on $\pi_2B$ is trivial. 
Then  the action of $\pi_1E$ on $\pi_2E$ is trivial.
\end{claim}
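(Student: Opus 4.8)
The plan is to read everything off from the homotopy long exact sequence of the fibration $\mathbb{S}^1\to E\xrightarrow{p}B$. First I would fix a basepoint $e_0\in E$ lying in the fiber $F=p^{-1}(b_0)\cong\mathbb{S}^1$ over $b_0=p(e_0)$. Since the universal cover of $\mathbb{S}^1$ is contractible, $\pi_2\mathbb{S}^1=0$, so the segment
\[
\pi_2\mathbb{S}^1\longrightarrow \pi_2E\xrightarrow{p_*}\pi_2B
\]
of the exact sequence already shows that $p_*\colon\pi_2E\to\pi_2B$ is \emph{injective}.

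The second, and essentially only other, ingredient is the naturality of the action of $\pi_1$ on $\pi_2$: for the based map $p\colon(E,e_0)\to(B,b_0)$ one has
\[
p_*(\gamma\cdot\alpha)=(p_*\gamma)\cdot(p_*\alpha)\qquad\text{for all }\gamma\in\pi_1E,\ \alpha\in\pi_2E,
\]
where $p_*\gamma\in\pi_1B$ acts on $p_*\alpha\in\pi_2B$ via the standard action of $\pi_1B$ on $\pi_2B$. By hypothesis that action is trivial, so $p_*(\gamma\cdot\alpha)=p_*\alpha$; writing $\pi_2$ additively, $p_*(\gamma\cdot\alpha-\alpha)=0$. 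Since $p_*$ is injective on $\pi_2$, this forces $\gamma\cdot\alpha=\alpha$ for all $\gamma\in\pi_1E$ and $\alpha\in\pi_2E$, which is exactly the assertion.

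Honestly, there is no genuinely difficult step here: the two points to pause over are (a) the vanishing $\pi_2\mathbb{S}^1=0$, which is what makes $p_*$ injective on $\pi_2$, and (b) the equivariance of $p_*$ over the homomorphism $p_*\colon\pi_1E\to\pi_1B$ — the standard naturality of the $\pi_1$-action — which one should be a little careful to phrase correctly, since the relevant action on $\pi_2B$ is through $\pi_1B$ and not directly by $\pi_1E$. I would also note that this argument does not actually use the orientation hypothesis on the $\mathbb{S}^1$-bundle; presumably it is recorded here because the circle bundles occurring in the tower of Theorem~\ref{thm:smooth} are oriented once one has passed to the relevant finite cover, so the hypothesis is harmless and available. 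If one wished to avoid invoking naturality of the action abstractly, an alternative would be to pull the bundle back along the universal cover $\tilde B\to B$ and follow the $\pi_1E$-action through the covering map $\tilde E\to E$, but the exact-sequence argument above is by far the most economical.
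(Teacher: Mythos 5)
Your proof is correct, and it takes a genuinely different (and more economical) route than the paper's. The paper proves this claim geometrically: given $\gamma\in\pi_1E$, $\alpha\in\pi_2E$ with projections $\bar\gamma,\bar\alpha$, the trivial action in $B$ is witnessed by a map $h\colon\mathbb{S}^2\times\mathbb{S}^1\to B$; one then pulls back the circle bundle along $h$, observes that the resulting bundle $E'\to\mathbb{S}^2\times\mathbb{S}^1$ is trivial (this is where the orientation enters, via the Euler class --- $\langle e(E),\bar\alpha\rangle=\langle p^*e(E),\alpha\rangle=0$ since $\bar\alpha$ lifts to $\pi_2E$), and reads off the conclusion from $E'\cong\mathbb{S}^2\times T^2$. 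You instead observe that $\pi_2\mathbb{S}^1=0$ makes $p_*\colon\pi_2E\to\pi_2B$ injective via the homotopy exact sequence, and then invoke naturality of the $\pi_1$-action under based maps. Both arguments are sound; yours is shorter, avoids the Euler-class computation entirely, and as you note does not even require the orientation hypothesis. The paper's geometric pullback technique is not wasted effort, though: the analogous later claims (trivialization over the $1$-skeleton, monodromy of $F$-bundles, the Technical Lemma) all hinge on exactly this kind of pullback-to-$\mathbb{S}^2\times\mathbb{S}^1$ argument, so the paper is presumably establishing the template here in the simplest case. Your alternative would not generalize to those later claims, since for a simply connected fiber $F$ the map $\pi_2E\to\pi_2B$ is no longer injective.
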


\begin{proof}[Proof.] 
Let $\gamma\in \pi_1(E,p)$ and $\alpha\in \pi_2(E,p)$. 
Let us denote by $\bar\gamma\in \pi_1(B,\bar p)$ and 
$\bar\alpha\in \pi_2(B,\bar p)$ their projection to $B$.

By our assumptions, $\bar\alpha^{\bar\gamma}=\bar\alpha$, or equivalently, there is a  map
$h\colon \mathbb{S}^2\times \mathbb{S}^1\to B$ such that 
$$h(u,v)=\bar p,\quad
h(u,*)\sim\bar\gamma\quad\text{ and}\quad
h(*,v)\sim\bar\alpha.$$

The induced bundle $\mathbb{S}^1\to E'\to \mathbb{S}^2\times \mathbb{S}^1$ is trivial. 
Therefore the $\alpha^\gamma=\alpha$ 
for any element in $\gamma\in \pi_1E'$.
\end{proof}

\begin{claim}
Let $F\xrightarrow{i} E\to B$ be a fiber bundle with 
simply connected fiber $F$ over a compact manifold $B$, 
which admits a trivialization over the $1$-skeleton of $B$. 
Assume $\pi_1B$ is almost nilpotent 
and the action of $\pi_1B$ on $\pi_2B$ is almost trivial. 
Then  the action of $\pi_1E$ on $\pi_2E$ is almost trivial.
\end{claim}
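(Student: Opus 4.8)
The plan is to reduce to a setting where the action can be computed directly. Since $F$ is simply connected, the homotopy exact sequence of $F\to E\to B$ yields an isomorphism $\pi_1E\cong\pi_1B$ and a short exact sequence $0\to i_*(\pi_2F)\to\pi_2E\to\pi_2B\to0$, the last map being induced by the projection $p\colon E\to B$. The almost triviality of the $\pi_1B$ action on $\pi_2B$ gives a finite index subgroup of $\pi_1B=\pi_1E$ acting trivially on $\pi_2B$; passing to the associated finite covers $\hat E\to E$, $\hat B\to B$ (chosen cellular) leaves all hypotheses in force — in particular $\hat E$ is still trivial over the $1$-skeleton of $\hat B$ — and does not change $\pi_{\ge2}$. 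So I may and will assume that $\pi_1B$ acts trivially on $\pi_2B$, and it then suffices to show that $\pi_1E$ acts trivially on $\pi_2E$. Moreover, since $E$ is trivial over the $1$-skeleton, the monodromy of every loop in $B$ is homotopic to $\mathrm{id}_F$, so the (monodromy) action of $\pi_1B=\pi_1E$ on $\pi_2F$, hence on the submodule $i_*(\pi_2F)\subset\pi_2E$, is trivial.

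Next I would organize the failure of triviality into a cocycle. For $\gamma\in\pi_1E$ and $\alpha\in\pi_2E$ set $\psi(\gamma,\alpha)=\alpha^\gamma-\alpha$. Its image under $p_*$ equals $\bar\alpha^{\bar\gamma}-\bar\alpha=0$, so $\psi(\gamma,\alpha)\in i_*(\pi_2F)=\ker p_*$; since $\gamma$ acts trivially there, $\psi(\gamma,\alpha)$ depends only on $\bar\alpha=p_*\alpha$, and since the $\gamma$ action is additive, $\bar\alpha\mapsto\psi(\gamma,\alpha)$ defines a homomorphism $\bar\psi_\gamma\colon\pi_2B\to i_*(\pi_2F)$. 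An easy cocycle computation, using that $\pi_1E$ acts trivially on both $\pi_2B$ and $i_*(\pi_2F)$, shows that $\gamma\mapsto\bar\psi_\gamma$ is a group homomorphism $\pi_1E\to\mathrm{Hom}(\pi_2B,\,i_*(\pi_2F))$ whose kernel is exactly the subgroup of $\pi_1E$ acting trivially on $\pi_2E$. It is therefore enough to prove that this homomorphism has finite image.

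The main work is to show that in fact $\psi(\gamma,\alpha)\in i_*(\mathrm{Tor}\,\pi_2F)$, and this is where the Technical Lemma is used. Given $\gamma$ and $\alpha$, the equality $\bar\alpha^{\bar\gamma}=\bar\alpha$ produces, exactly as in the proof of the preceding claim, a map out of $\mathbb{S}^2\times\mathbb{S}^1$ into $B$ realizing this homotopy. Pulling the bundle back along it and using the trivialization over the $1$-skeleton of $B$ — which is what guarantees that the holonomy in the $\mathbb{S}^1$ direction, over the basepoint circle of $\mathbb{S}^2$, is trivial — I would, by transporting a lift of $\bar\alpha$ around $\bar\gamma$, extract a $\DD$-family $h$ of homotopy self-equivalences of $F$ with $h(u,\,\cdot\,)=\mathrm{id}_F$ for $u\in\partial\DD$, together with a map $f\colon\DD\to F$ coming from that lift over a hemisphere, such that $\alpha^\gamma-\alpha$ is $i_*$ of a class of the form $[f_h]-[f]$. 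The Technical Lemma then gives $[f_h]-[f]=[\beta]\in\mathrm{Tor}\,\pi_2F$, so $\psi(\gamma,\alpha)\in i_*(\mathrm{Tor}\,\pi_2F)$. I expect this construction — in particular, carefully matching the ``transport around $\bar\gamma$'' picture with the $f_h$ operation of the Lemma and verifying that triviality over the $1$-skeleton really does force $h(u,\,\cdot\,)=\mathrm{id}_F$ on $\partial\DD$ — to be the main obstacle of the proof.

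Finally I would put the pieces together. The group $\pi_1B$ is finitely generated (as $B$ is a compact manifold) and almost nilpotent, hence virtually polycyclic, so $\mathbb{Z}[\pi_1B]$ is Noetherian; thus $\pi_2B$, being finitely generated as a $\mathbb{Z}[\pi_1B]$-module (lift a finite CW structure on $B$ to its universal cover), is a finitely generated abelian group now that the $\pi_1B$ action on it is trivial. Also $\pi_2F$ is finitely generated (as $F$ is a compact manifold), so $\mathrm{Tor}\,\pi_2F$ is finite. Hence $\mathrm{Hom}(\pi_2B,\,i_*(\mathrm{Tor}\,\pi_2F))$ is finite, and by the previous paragraph $\gamma\mapsto\bar\psi_\gamma$ takes values in this finite group; its kernel therefore has finite index in $\pi_1E$ and acts trivially on $\pi_2E$. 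Tracing this back through the finite cover, the action of $\pi_1E$ on $\pi_2E$ is almost trivial.
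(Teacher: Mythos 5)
Your proposal is essentially the same proof as the paper's through the crucial middle step: pass to a finite cover to make the $\pi_1B$-action on $\pi_2B$ trivial and $\pi_1B$ nilpotent, realize $\bar\alpha^{\bar\gamma}=\bar\alpha$ by a map $\mathbb{S}^2\times\mathbb{S}^1\to B$, pull the bundle back, use the trivialization over the $1$-skeleton to arrange the $\mathbb{S}^1$-monodromy to be a $\mathbb D$-family of self-equivalences fixing $\partial\mathbb D$, and apply the Technical Lemma to conclude $\alpha^\gamma-\alpha\in i_*(\mathrm{Tor}\,\pi_2F)$. The step you flag as the ``main obstacle'' is exactly what the paper carries out with the maps $m,m',f'$, and your description of it is faithful.

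Where you diverge is the endgame. The paper iterates the monodromy: from $\alpha^\gamma-\alpha=i_*(\beta)$ and triviality of the bundle over $\mathbb{S}^1$ it deduces $\alpha^{\gamma^n}-\alpha=i_*(n\beta)$, hence $\gamma^{n(F)}$ with $n(F)=|\mathrm{Tor}\,\pi_2F|$ acts trivially on every $\alpha$; since $\pi_1B$ is finitely generated nilpotent, the verbal subgroup $\langle\gamma^{n(F)}\rangle$ has finite index. You instead package the deviation as $\psi(\gamma,\alpha)=\alpha^\gamma-\alpha$, observe (using triviality over the $1$-skeleton to kill the action on $i_*(\pi_2F)$) that this factors through $\pi_2B$ and is additive in $\gamma$, so $\gamma\mapsto\bar\psi_\gamma$ is a homomorphism into $\mathrm{Hom}(\pi_2B,\,i_*(\mathrm{Tor}\,\pi_2F))$; you then prove this target is finite via Noetherianity of $\mathbb{Z}[\pi_1B]$ (virtually polycyclic) to get $\pi_2B$ finitely generated. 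Both conclusions are correct. The paper's route is shorter and needs only elementary nilpotent group theory; your route exposes more structure (a bona fide homomorphism whose kernel is precisely the trivially acting subgroup, and an explicit finite bound $|\mathrm{Hom}(\pi_2B,\mathrm{Tor}\,\pi_2F)|$ on the index), at the cost of invoking the finite generation of $\pi_2B$ as a $\mathbb{Z}[\pi_1B]$-module, which the paper does not need. Either way the argument closes; the content you have not fully written out (the identification of the transport-around-$\gamma$ picture with the Technical Lemma's $f\mapsto f_h$) is the same content the paper spends the bulk of its proof on, so it is a deferral rather than a gap in approach.
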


\begin{proof}[Proof.] 
By taking the pullback of $F\to E\to B$ to a finite cover of $B$ we can assume that  the action of $\pi_1B$ on $\pi_2B$ is trivial and $\pi_1B$ is nilpotent.

Let $\gamma\in \pi_1(E,p)$ and $\alpha\in \pi_2(E,p)$. 
Let us denote by $\bar\gamma\in \pi_1(B,\bar p)$ and 
$\bar\alpha\z\in \pi_2(B,\bar p)$ their projections to $B$.

By assumptions, there is a map
$h\colon \mathbb{S}^2\times \mathbb{S}^1\to B$ such that 
$$h(u,v)=\bar p,\quad
h(u,*)\sim\bar\gamma\quad\text{ and}\quad
h(*,v)\sim\bar\alpha.$$
The induced bundle $F\to E'\to \mathbb{S}^2\times \mathbb{S}^1$ admits a 
trivialization over $u\times \mathbb{S}^1$.
Let $E''$ be the  induced bundle $F\to E''\to \mathbb{S}^2\times v\subset \mathbb{S}^2\times \mathbb{S}^1$.
The  monodromy map $f\colon E''\to E''$ around $\mathbb{S}^1$
induces the  identity map on the base $\mathbb{S}^2$ 
and due the bundle being trivial  over $u\times \mathbb{S}^1$,
it  can be chosen to be the identity on $F_v$.

Consider a map $m\colon \DD\to \mathbb{S}^2$ such that 
\begin{enumerate}[(1)]
\item  $m$ is injective in the interior of $\DD$, 
\item  $m(\partial \DD)=v$.
\end{enumerate}
Fix a trivialization 
$F\times \DD$ of the induced bundle. 
Let 
$m'\colon F\times \DD\to E''$ be the correspondent mapping. 
By construction, $m'(*,x)$ is a homeomorphism  between $F\times x$ and the fiber $F_{m(x)}$.
Then the map $f\colon E''\to E''$ is completely described by the map 
$$f'\colon F\times \DD\to F\times \DD $$ 
uniquely determined  by the  identity
$m'\circ f'=f\circ m'$. 
Clearly $f'(*,x)=\mathrm{id}_{F\times x}$ 
for any $x\in\partial \DD$.

By the Technical Lemma, there is an element 
$[\beta]\in \mathrm{Tor}(\pi_2F)$ such that 
\[[i(\beta)]=\alpha^\gamma-\alpha.\]
Since the bundle is trivial over $\mathbb{S}^1$,
\[[i(n{\cdot} \beta)]=\alpha^{\gamma^n}-\alpha\] and therefore for $n(F)=|\mathrm{Tor}(\pi_2F)|$ we have 
$\alpha^{\gamma^{n(F)}}-\alpha=0$. 

Since $\pi_1B$ is nilpotent and finitely generated we have that the subgroup  \[\langle\gamma^{n(F)}|\gamma\in \pi_1B\rangle<\pi_1B\] 
has finite index.
\end{proof}

\subsection{Reliable manifolds}

\begin{defn}
A compact connected manifold $M$ is called \emph{reliable} if 
\begin{enumerate}
\item \label{reliable1} For any 
$\tau\in \mathrm{Tor}(\pi_1(M,p))$ and for any other element $\gamma\in \pi_1(M,p)$ there is a map of $h\colon \mathbb{S}^1\times \mathbb{S}^1\to M$  such that 
$$h(u,v)= p,\quad
h(u,*)\sim\gamma\quad\text{ and}\quad
h(*,v)\sim\tau.$$
\item \label{reliable2} There is a finite cover $\iota\colon \tilde T^2\z\to \mathbb{S}^1\times \mathbb{S}^1$ and a map of solid torus $f\colon \DD\times \mathbb{S}^1\z\to M$ and an isomorphism $i\colon \tilde T^2\to \partial \DD\times \mathbb{S}^1$ such that $f\circ i=h\circ\iota$.
\end{enumerate}

\end{defn}

Evidently, if $M$ is \emph{reliable} then  $\mathrm{Tor}(\pi_1M)\subset \mathrm{Z}(\pi_1M)$.

\begin{claim}
 Let $\mathbb{S}^1\to E\to B$ be an oriented $\mathbb{S}^1$-bundle such that the base  $B$ is reliable and the action of $\pi_1B$ on $\pi_2B$ is almost trivial. 
Then $E$ is reliable.
\end{claim}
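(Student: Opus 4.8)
The plan is to verify the two conditions in the definition of \emph{reliable} for the total space $E$ of an oriented $\mathbb{S}^1$-bundle $\mathbb{S}^1\to E\to B$, given that $B$ is reliable and that $\pi_1B$ acts almost trivially on $\pi_2B$. First I would reduce to the case where the $\pi_1B$-action on $\pi_2B$ is genuinely trivial: pass to the finite cover of $B$ on which the action becomes trivial, pull back the $\mathbb{S}^1$-bundle, and note that a finite cover of $E$ suffices since reliability of a finite cover forces $\mathrm{Tor}(\pi_1)\subset \mathrm{Z}(\pi_1)$ only after we check that this is inherited by $E$ itself — actually the cleaner route is to observe that reliability is really a statement about individual pairs $(\tau,\gamma)$, so it is enough to handle a finite-index subgroup and then argue separately that torsion elements outside it still centralize everything. (I would want to be a little careful here; see the obstacle below.)

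Next, for condition~(1): given $\tau\in\mathrm{Tor}(\pi_1E)$ and an arbitrary $\gamma\in\pi_1E$, let $\bar\tau,\bar\gamma\in\pi_1B$ be their projections. Then $\bar\tau\in\mathrm{Tor}(\pi_1B)$, so by reliability of $B$ there is a torus map $\bar h\colon \mathbb{S}^1\times\mathbb{S}^1\to B$ with $\bar h(u,v)=\bar p$, $\bar h(u,*)\sim\bar\gamma$, $\bar h(*,v)\sim\bar\tau$. The pulled-back bundle $\mathbb{S}^1\to E'\to \mathbb{S}^1\times\mathbb{S}^1$ is an oriented $\mathbb{S}^1$-bundle over the $2$-torus, classified by its Euler number $e\in H^2(T^2;\mathbb{Z})\cong\mathbb{Z}$. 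Here I would invoke the first Claim of the preceding subsection (the one about $\pi_2$): it already gives that $\pi_1E$ acts trivially on $\pi_2E$ whenever $B$ has trivial $\pi_1$-action on $\pi_2$ — this should feed into controlling how $\tau$ and $\gamma$ interact. The key computation is that $\tau$ being torsion in $\pi_1E$ constrains $e$: lifting the loop $\bar\tau$ and its conjugate by $\bar\gamma$, the commutator $[\gamma,\tau]\in\pi_1E$ maps to the fiber $\mathbb{S}^1$, and its value is (up to sign) the Euler number $e$ of $E'$; since $\tau$ has finite order, one deduces $e=0$, so $E'$ is the trivial bundle $T^2\times\mathbb{S}^1=T^3$. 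Then the obvious map $h\colon T^2\to E'\subset \mathbb{S}^1\times\mathbb{S}^1\ni(\cdot)$, wait — rather, compose a torus into $E'$ realizing $\gamma$ and $\tau$; triviality of $E'$ makes this routine, and projecting to $E$ gives the desired $h$ for condition~(1).

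For condition~(2): I need a finite cover $\iota\colon\tilde T^2\to\mathbb{S}^1\times\mathbb{S}^1$, a solid-torus map $f\colon \DD\times\mathbb{S}^1\to E$, and an identification of $\tilde T^2$ with $\partial\DD\times\mathbb{S}^1$ so that $f\circ i=h\circ\iota$. The idea is to lift the reliability data of the base: by reliability of $B$ we have $\bar f\colon\DD\times\mathbb{S}^1\to B$ and $\bar\iota\colon\tilde T^2\to\mathbb{S}^1\times\mathbb{S}^1$ with $\bar f\circ\bar i=\bar h\circ\bar\iota$. Pull the $\mathbb{S}^1$-bundle back along $\bar f$ to get a bundle over the solid torus $\DD\times\mathbb{S}^1$; since $\DD\times\mathbb{S}^1$ is homotopy equivalent to $\mathbb{S}^1$ and oriented $\mathbb{S}^1$-bundles over $\mathbb{S}^1$ are trivial, this pulled-back bundle admits a section, giving $f\colon\DD\times\mathbb{S}^1\to E$ lifting $\bar f$. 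One then checks the boundary-matching identity descends from the base after possibly enlarging the cover $\tilde T^2$ to absorb the monodromy of the $\mathbb{S}^1$-fiber, and the fact (from condition~(1), established above) that the relevant bundle over $\tilde T^2$ is trivial ensures the identification $\tilde T^2\cong\partial\DD\times\mathbb{S}^1$ can be arranged compatibly.

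\textbf{The main obstacle} I expect is bookkeeping the basepoints and the choice of finite cover so that the two conditions are verified \emph{simultaneously} with a \emph{single} cover $\tilde T^2$ and a \emph{single} map $h$ — condition~(2) refers to the same $h$ produced in condition~(1), so the construction in the first step must be done with foresight. Concretely, the trivialization of $E'$ over the $2$-torus obtained from $e=0$ must be chosen to be compatible with the trivialization of the bundle over the solid torus; reconciling these is where the orientability hypothesis on the $\mathbb{S}^1$-bundle and the almost-triviality of the $\pi_2$-action both get used. A secondary subtlety is justifying the reduction to the trivial-$\pi_2$-action case: one must confirm that realizing condition~(2) for a finite cover of $B$, together with the extra circle direction, still yields a \emph{finite} cover $\tilde T^2$ of the original $\mathbb{S}^1\times\mathbb{S}^1$ (not merely of a sub-torus), which requires that the index of the chosen subgroup of $\pi_1B$ be controlled — this is exactly the sort of index bound the final step of the previous Claim's proof supplies via nilpotency and finite generation.
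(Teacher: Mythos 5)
Your general plan is right, but both steps have real gaps that the paper's proof closes in ways you don't reproduce.

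For condition~(1), your Euler-number argument is incomplete. You correctly observe that for the pullback $\mathbb{S}^1\to E'\to T^2$ the commutator of lifts equals $e\cdot\phi$ where $\phi$ is the fiber class, and since the bundle is oriented $\phi$ is central in $\pi_1E$. But from $\tau^n=1$ and centrality of $\phi$ you only get $(e\cdot\phi)^n=1$, i.e.\ that $e\cdot\phi$ is torsion in $\pi_1E$ --- not that $e=0$, and not even that $[\gamma,\tau]=1$, since the fiber class may itself be a nontrivial torsion element. The paper instead uses condition~(2) of the reliability of $B$: the torus $\bar h$ bounds a solid torus after a finite cover, hence $[\bar h]=0$ in $H_2(B;\mathbb{Q})$, hence $\langle e(E),[\bar h]\rangle=0$, which really does give $e=0$ and $E'\cong T^3$. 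That is the observation your argument is missing, and it is the one that makes the whole lift in condition~(2) possible.

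For condition~(2) you correctly identify the difficulty --- the section over the solid torus and the lift of $\bar h$ over $\partial\DD\times\mathbb{S}^1$ need not match, i.e.\ the pulled-back torus in $E$ is a torus generated by a horizontal section $s(\psi)$ over the core circle $\psi$ and the fiber $\phi$, and one must actually contract it. But ``enlarging the cover to absorb the monodromy'' is not an argument. The paper's resolution is a genuine case split on the order of $[\psi]\in\pi_1B$: if $[\psi]$ has finite order one kills it by a further cover and contracts the remaining fiber torus directly; if $[\psi]$ has infinite order one shows that the torsion lift $\tilde\tau$ must be a pure power of the fiber $\phi$ (using that $\bar\tau$ is torsion while $[\psi]$ is not), deduces that $\phi$ is torsion, contracts a multiple $k\phi$ by a disc whose projection gives $\alpha\in\pi_2B$, and only here invokes the almost-triviality of the $\pi_1B$-action on $\pi_2B$ to extend to a map $\mathbb{S}^2\times\mathbb{S}^1\to B$ and lift it. Your sketch never locates where the $\pi_2$-hypothesis enters, and without the dichotomy on $[\psi]$ the solid torus never gets built.

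A smaller point: the paper does not pass to a global finite cover of $B$ to make the $\pi_2$-action trivial --- it only takes finite covers of the auxiliary torus $\tilde T^2$. Your opening reduction is therefore unnecessary and, as you yourself flag, somewhat shaky; the correct reading of ``almost trivial'' here is used pointwise in Case~2 rather than by replacing $B$.
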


\begin{proof}
Let $\gamma\in \pi_1(E,p)$ and $\tau\in \mathrm{Tor}(\pi_1(E,p))$. 
Let us denote by $\bar\gamma\in \pi_1(B,\bar p)$ and 
$\bar\tau\in \pi_1(B,\bar p)$ their projections to $B$.

From the assumptions we have 
that there is a map 
$\bar h\colon \mathbb{S}^1\times \mathbb{S}^1\to B$ such that 
$$\bar h(u,v)=\bar p,\quad
\bar h(u,*)\sim\bar\gamma\quad\text{ and}\quad
\bar h(*,v)\sim\bar\tau,$$
which satisfy the definition of a reliable manifold.

Note that since $B$ is reliable we have $[h]=0\in H_2(B,\mathbb{Q})$. Therefore the induced bundle $\mathbb{S}^1\to E'\to \mathbb{S}^1\times \mathbb{S}^1$ is trivial, in particular $E'=T^3$. This  implies that $\gamma$ and $\tau$ commute. Hence \eqref{reliable1} is satisfied and we only need to check \eqref{reliable2}.

Since the bundle  $\mathbb{S}^1\to E'\to \mathbb{S}^1\times \mathbb{S}^1$ is trivial there is a lifting $h\colon \mathbb{S}^1\times \mathbb{S}^1\to E$ of the map $\bar h$ such that
$$ h(u,v)= p,\quad
 h(u,*)\sim\gamma\quad\text{ and}\quad
 h(*,v)\sim\tau,$$
Consider the covering map $h\circ\iota\colon \tilde T^2\to E$.

By the definition of a reliable manifold there is a covering $\iota\colon \tilde T^2\z\to \mathbb{S}^1\times \mathbb{S}^1$ and a map of solid torus $\bar f\colon \DD\times \mathbb{S}^1\z\to B$ and an isomorphism $i\colon \tilde T^2\to \partial \DD\times \mathbb{S}^1$ such that $\bar f\circ i=\bar h\circ\iota$. The pullback bundle $\mathbb{S}^1\to \tilde E'\to \DD\times \mathbb{S}^1$ is trivial. Let $s\co  \DD\times \mathbb{S}^1\to  \tilde E'$ be a section.

From above, we have that $h\circ\iota$ is homotopic to a map
$h'\colon \tilde T^2\to E$ whose projection to $B$ is the central $\mathbb{S}^1$ of 
$\DD\times \mathbb{S}^1$. 
Therefore there are loops $\psi\subset B$ and $\phi$ in a fiber $\mathbb{S}^1$  such that $h\circ\iota$ is homotopic to a torus with meridians in $s(\psi)$ and $\phi$.

Suppose 
$[\psi]$ has finite order in $\pi_1B$. By changing the cover $\tilde T\to T$ we can assume that $[\psi]=0$  in $\pi_1B$.
Then we can homotope $h'$ to a single $\mathbb{S}^1$-fiber, and this easily gives the needed map of  a solid torus.

Now suppose that $[\psi]$ has infinite order in $\pi_1B$. 
Let $\tilde\tau$ be the homotopy class of a component in the preimage of $\tau$ under the covering $\tilde T\to T$. 
On the level of  the fundamental groups we have that $\tilde\tau=[\phi]^a{\cdot} [s(\psi)]^b$ for some integer $a,b$. Projecting to $B$ we get that $\bar\tau=[\psi]^b$. Since $\tau$ is by assumption torsion and $[\psi]$ is not this means that $b=0$. Thus $\tilde\tau=[\phi]^a$ and hence
$i(\phi)\in \mathrm{Tor}(\pi_1E)$. 
Therefore passing to a finite cover of $\tilde T^2$ 
if necessary we get a map of the 
disc $\DD\to E$ which contracts one of the generators ($k \phi$) of $\tilde T^2$.
The projection of this disc to $B$ gives an element $\alpha\in \pi_2B$ 
and since $\pi_1B$ acting on $\pi_2B$ almost trivially we get 
that passing to finite cover of $\tilde T^2$ again (along $\psi$) we have a map of $\bar g\colon \mathbb{S}^2\times \mathbb{S}^1\to B$ with
$$ 
 \bar g(u,*)\sim\psi^l\quad\text{ and}\quad
 \bar g(*,v)\sim\alpha.$$
We can lift this map to a map $g\colon \DD\times \mathbb{S}^1\to E$ in such a way that $g\circ i=h'\circ \iota$. 
Together with the homotopy above, the latter gives the needed map of 
$\DD\times \mathbb{S}^1\z\to E$.
\end{proof}

\begin{claim} 
There is a positive integer $n=n(F,\dim B)$ such that the following holds.

Assume $F\to E\to B$ 
is a fiber bundle such that
\begin{enumerate}[(1)]
\item the fiber $F$ is simply connected;
\item the base $B$ is reliable;
\item the fundamental group of $B$ is nilpotent and is generated by $\le c(\dim B)$ elements;
\item the bundle admits a trivialization over the $1$-skeleton of $B$.
\end{enumerate}
Then there is an at most $n$-fold cover $\tilde E$ of the  total space $E$ which is reliable.
\end{claim}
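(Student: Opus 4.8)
The plan is to build the reliable cover $\tilde E$ by combining two independent cover-producing mechanisms and then estimating the combined degree. First I would take a finite cover of the base to kill the monodromy: since $\pi_1 B$ is nilpotent and generated by $\le c(\dim B)$ elements, and the bundle is trivial over the $1$-skeleton of $B$, the monodromy representation $\pi_1 B \to \pi_0 \mathrm{Aut}_0(F)$ (well, into the relevant component structure) is controlled. More precisely, by the previous Claim the action of $\pi_1 E$ on $\pi_2 E$ becomes trivial after an at most $n_1(F)$-fold cover, where $n_1(F) = |\mathrm{Tor}(\pi_2 F)|$ raised to the number of generators; I would first pass to that cover, so that from now on $\pi_1 E$ acts trivially on $\pi_2 E$ and $\pi_1 B$ acts trivially on $\pi_2 B$ (the base cover also keeps $B$ reliable, possibly after a further bounded cover using the definition of reliability).

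Next, with these actions trivial, I would verify condition \eqref{reliable1} for $E$ directly: given $\tau \in \mathrm{Tor}(\pi_1 E)$ and $\gamma \in \pi_1 E$ with projections $\bar\tau, \bar\gamma$, reliability of $B$ gives a torus map $\bar h\colon \mathbb{S}^1\times\mathbb{S}^1 \to B$ realizing the commuting relation, and since $[\bar h]=0\in H_2(B;\mathbb{Q})$ (as in the oriented $\mathbb{S}^1$-bundle Claim above, using that $B$ reliable forces the fundamental class of $\bar h$ to vanish rationally) the pulled-back bundle $F\to E'\to T^2$ is rationally trivial — but here $F$ is only simply connected, not $\mathbb{S}^1$, so I cannot immediately conclude the bundle is trivial. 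This is where the Technical Lemma enters: the monodromy of $E'$ around the two circle directions is captured, up to homotopy, by the class $[\beta]\in\mathrm{Tor}(\pi_2 F)$, and after an at most $n_2 = |\mathrm{Tor}(\pi_2 F)|$-fold cover of the torus the relevant monodromy contribution dies, so $h$ lifts to $E$ and $\gamma, \tau$ commute. For \eqref{reliable2} I would then argue exactly as in the reliable-$\mathbb{S}^1$-bundle Claim: lift the solid-torus filling $\bar f\colon \DD\times\mathbb{S}^1\to B$ provided by reliability of $B$, trivialize the pullback over $\DD\times\mathbb{S}^1$ (contractible base), pick a section, and split into the two cases according to whether the boundary circle $[\psi]$ has finite or infinite order in $\pi_1 B$; in the finite-order case contract it and fill with a disc in the fiber after a bounded cover, and in the infinite-order case use the (now trivial) $\pi_1 B$-action on $\pi_2 B$ together with the torsion-ness of $\tau$ to produce the map $\bar g\colon \mathbb{S}^2\times\mathbb{S}^1\to B$ and lift it, yielding the needed solid-torus map into $E$ after one more bounded cover.

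Finally I would assemble the degree bound: $\tilde E \to E$ is the composition of the monodromy-killing cover (degree $\le n_1$), the commuting cover from the Technical Lemma (degree $\le n_2$), and the two solid-torus-filling covers in the \eqref{reliable2} argument (each of degree bounded in terms of $|\mathrm{Tor}(\pi_2 F)|$, $|\mathrm{Tor}(\pi_1 F)|=1$, and the covers used to make $B$ reliable, all of which are functions of $F$ and $\dim B$ only, by induction on the tower since $B$ itself came from a shorter tower). Collecting, $n(F,\dim B)$ is an explicit product of these quantities.

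\textbf{Main obstacle.} The delicate point, and the one I expect to consume most of the argument, is the \eqref{reliable2} verification in the infinite-order case: one must track how the torsion element $\tau$ sits relative to the fiber and base circles after all the covers, show that the $\mathbb{S}^2$-direction class $\alpha\in\pi_2 B$ it produces is genuinely killed by the $\pi_1 B$-action (which is only \emph{almost} trivial before passing to the base cover), and then lift $\bar g$ compatibly with the already-chosen homotopy of $h'\circ\iota$ — the compatibility condition $g\circ i = h'\circ\iota$ is easy to state but requires care to arrange simultaneously with the trivialization choices, and it is the step most likely to hide an extra bounded cover that must be folded into the constant.
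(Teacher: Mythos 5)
Your outline correctly anticipates that the proof splits into ``kill obstructions by a bounded cover'' plus ``fill the resulting torus,'' but the mechanism you propose for the filling step (\ref{reliable2}) would not work. You want to ``argue exactly as in the reliable-$\mathbb{S}^1$-bundle Claim'' and split into cases according to whether the loop $[\psi]$ has finite or infinite order, then in the infinite-order case extract a fiber loop $\phi$ with $\tilde\tau=[\phi]^a[s(\psi)]^b$ and conclude $b=0$. That entire case structure is specific to fiber $\mathbb{S}^1$: the torus $h'$ decomposes into a base circle $\psi$ and a fiber circle $\phi$ because $\pi_1(\mathbb{S}^1)\ne 0$. Here $F$ is simply connected, so $\pi_1 E\cong\pi_1 B$, there is no fiber circle, and the obstruction to filling the lifted torus is not in $\pi_1$ at all. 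It lives in $\pi_2 F$. No analog of the $\phi/\psi$ splitting exists, and the $\bar g\colon\mathbb{S}^2\times\mathbb{S}^1\to B$ trick (which depends on the $\pi_1 B$-action on $\pi_2 B$) is aimed at the wrong homotopy group: the relevant class is in $\pi_2 F$, not $\pi_2 B$. The paper handles this by building a flow on the pullback over $\DD\times\mathbb{S}^1$, extracting the monodromy $\hat h\colon\DD\times F\to F$ with $\hat h|_{\partial\DD\times F}=\mathrm{id}$, and invoking the Technical Lemma to get $[\beta]\in\mathrm{Tor}(\pi_2F)$ with $[f_{\hat h}]-[f]=[\beta]$; iterating $n=|\mathrm{Tor}(\pi_2F)|$ times kills it and yields the solid torus after an $n$-fold further cover in the $\tau$-direction. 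Your write-up never constructs this monodromy map $\hat h$ and never explains what the $\pi_2F$-obstruction is or why it becomes trivial, so the filling step is a gap, not just a compressed version of the argument.

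There is a second, smaller misdirection at the front end. You propose to start by applying the preceding Claim (action of $\pi_1E$ on $\pi_2E$ becomes almost trivial) and to produce the commutativity (\ref{reliable1}) from a monodromy class in $\mathrm{Tor}(\pi_2 F)$. The paper does not use the previous Claim here, and commutativity is not deduced from the Technical Lemma. Instead, Step~1 uses Corollary~\ref{cor:DDK}: the bundle over the $1$-skeleton-trivial torus $T^2$ is pulled back from $\mathbb{S}^2$, hence has a clutching loop $\kappa\colon\mathbb{S}^1\to\Aut_0(F)$; reliability of $B$ forces some power $\kappa^N$ to be nullhomotopic (because the induced bundle over $\tilde T^2$ factors through the solid torus), so by \ref{cor:DDK} already $\kappa^{n(F)}$ is nullhomotopic; combined with nilpotency and the generator bound on $\pi_1B$ this yields a bounded cover $\tilde B$ over which every such bundle over $T^2$ is homotopically trivial, which directly gives a section $h\colon T^2\to E$ and hence (\ref{reliable1}). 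Your ``class $[\beta]$ captures the monodromy of $E'$'' statement conflates the clutching-map argument of Step~1 with the $\pi_2F$-filling argument of Step~2; as written it does not produce the trivialized bundle over $T^2$ that the rest of the argument needs.
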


\begin{proof} The proof is divided into two steps.

\noindent\textit{Step 1.} 
We will use the Corollary~\ref{cor:DDK} 
to construct 
an at most $n=n(F,\dim B)$-fold cover $\widetilde B\to B$ 
such that for any two elements $\bar\gamma\in \pi_1(\tilde B,\bar p)$ and $\bar\tau\in \Tor \pi_1(\tilde B,\bar p)$ there is a map 
$\bar h\colon \mathbb{S}^1\times \mathbb{S}^1\to \tilde B$ such that 
$$\bar h(u,v)=\bar p,\quad
\bar h(u,*)\sim\bar\gamma\quad\text{ and}\quad
\bar h(*,v)\sim\bar\tau,$$
which satisfy the condition in the  definition of reliable manifold and such that the induced bundle $\bar h^*$ over $\mathbb{S}^1\times \mathbb{S}^1$ with  fiber $F$ is homotopically trivial.

Let $\gamma\in \pi_1(E,p)$ and $\tau\in \mathrm{Tor}(\pi_1(E,p))$. 
Let us denote by $\bar\gamma\in \pi_1(B,\bar p)$ and 
$\bar\tau\in \pi_1(B,\bar p)$ their projections to $B$.

From the assumptions we have 
that there is a map 
$\bar h\colon \mathbb{S}^1\times \mathbb{S}^1\to B$ such that 
$$\bar h(u,v)=\bar p,\quad
\bar h(u,*)\sim\bar\gamma\quad\text{ and}\quad
\bar h(*,v)\sim\bar\tau,$$
which satisfy condition \eqref{reliable1} in the definition of a reliable manifold.

The  induced bundle $f^*$ over $\DD\times \mathbb{S}^1$ 
is trivial since the bundle over $\mathbb{S}^1$ is. 
Consider the  induced bundle $\bar h^*$ over $\mathbb{S}^1\times \mathbb{S}^1$. 
Pulling back by the finite cover $\iota\colon \tilde T^2\to \mathbb{S}^1\times \mathbb{S}^1$ the induced bundle 
$(\bar h \circ \iota)^*$ over $\tilde T^2$ becomes trivial, since it is also can be induced vial pull back by $i\colon \tilde T^2\to \DD\times \mathbb{S}^1$ and the bundle over $ \DD\times \mathbb{S}^1$ is trivial. 
The bundle over $\mathbb{S}^1\times \mathbb{S}^1$ admits a trivialization on 1-skeleton 
for the standard  product CW-structure. 
Therefore it can be induced from a bundle on $\mathbb{S}^2$ and a map of degree 1 
which sends the  1-skeleton of $\mathbb{S}^1\times \mathbb{S}^1$ to a single point of $\mathbb{S}^2$. 
Such a bundle over $\mathbb{S}^2$ can be described by its clutching map which is a one-parameter family of homeomorphisms 
$\kappa\colon \mathbb{S}^1\times F\to F$.

Note that the  homotopy type of $\kappa$ does not depend on the choice of a
trivialization over the 1-skeleton of $\mathbb{S}^1\times \mathbb{S}^1$.
Since the induced bundle over $\tilde T^2$ is trivial we have that 
if $ \iota\colon  \tilde T^2\to \mathbb{S}^1\times \mathbb{S}^1$ is an $N$-fold cover then 
$\kappa^N\colon \mathbb{S}^1\times F\to F$ is homotopic to the map $(z,x)\mapsto x$.
By Corollary \ref{cor:DDK},  $\kappa^{n(F)}$ 
is homotopic to the constant map $(z,x)\mapsto x$.

Since $\pi_1 B$ is nilpotent and generated by $\le c(\dim B)$ elements,
it follows that there is  an at most $n'=n'(F,\dim B)$-fold  cover $\widetilde B$ of $B$ such that for any two elements $\bar\gamma\in \pi_1(\tilde B,\bar p)$ and $\bar\tau\in \pi_1(\tilde B,\bar p)$ there is a map 
$\bar h\colon \mathbb{S}^1\times \mathbb{S}^1\to B$ such that 
$$\bar h(u,v)=\bar p,\quad
\bar h(u,*)\sim\bar\gamma\quad\text{ and}\quad
\bar h(*,v)\sim\bar\tau,$$
and such that the induced bundle $\bar h^*$ over $\mathbb{S}^1\times \mathbb{S}^1$ and fiber $F$ is homotopy trivial.

\medskip

From now on we will assume that that $B=\tilde B$.

\medskip

\noindent\textit{Step 2.} 
Now, using Technical Lemma~\ref{techlem}, 
we will prove that a section over 
$\bar h\colon \mathbb{S}^1\z\times \mathbb{S}^1\to B$, after finite cover (which is in fact a cover of $\tilde T^2$) bounds a solid torus;
that is, $ E$ is reliable. 

Let us summarize what we have:

\begin{enumerate}
\item \label{triv1} The  induced bundle $f^*$ over $\DD\times \mathbb{S}^1$ is trivial since the bundle over $\mathbb{S}^1$ is. 
\item \label{triv2} There is yet another  trivialization over $\partial \DD\times \mathbb{S}^1=\widetilde T^2$ which is induced from a  trivialization on $T^2=\mathbb{S}^1\times \mathbb{S}^1$.
\end{enumerate}

We can assume  that the bundle over $\DD\times \mathbb{S}^1$ is smooth and that
 both of the above trivializations are smooth. 
Consider the vector field $\bar V=\frac {\partial }{\partial t}$ on $\DD\times \mathbb{S}^1$;
here $t$ is the $\mathbb{S}^1$-coordinate and we assume $\mathbb{S}^1=\mathbb{R}/\mathbb{Z}$.

Let $p\co E'\to \DD\times \mathbb{S}^1$ be the pullback bundle over  $\DD\times \mathbb{S}^1$. 

Let $ V$ be the horizontal lift of $\bar V$ over $\tilde T^2$ with respect to  trivialization \eqref{triv2} above. 
Using partition of unity we can extend it to a vector field (which we'll still denote by $V$) on $E'$  which projects to $\bar V$. 
Let $\phi_t\co E'\to E'$ be the time $t$ integral flow of $V$. 
Note that by construction 
\begin{equation}\label{monodr-bry}
\phi_1(q)=q \text{ for any } q \in p^{-1}(\partial \DD\times\mathbb{S}^1).
\end{equation}

Since $E'$ is a trivial bundle we can identify its total space with $\DD\times F\times \mathbb{S}^1$ via, say, trivialization~\eqref{triv1} above. 
Consider the map  $m\co \DD\times F\times \mathbb{R}\to E$ given by the formula 
\[(u,x,t)\mapsto \phi_t(u,x,[0]),\]
here $u\in \DD$, $x\in F$ and $[0]\in \mathbb{S}^1=\mathbb{R}/\mathbb{Z}$.

By \eqref{monodr-bry} it follows that  $m(u,x,n)=m(u,x,0)$ for any $n\in\mathbb{Z}$, $u\in \partial \DD$ and $x\in F$.

Note that there is a map $\hat h\co \DD\times F\to F$ 
for which the following identity holds
\[m(u,\hat{h}(u,x),0)= m(u,x,1).\] 
Moreover $\hat h(u,x)=x$ for any $u\in\partial \DD$ and $x\in F$.
This means that we can apply Technical Lemma (\ref{techlem}) to $\hat h$.

Consider a horizontal section of $E'$ over $\partial \DD\times \mathbb{S}^1$ with respect to ~\eqref{triv2}. 
Fix $w\in \mathbb{S}^1$. 
This gives a with respect to ~\eqref{triv2} horizontal section over $\partial \DD\times w$.
Since 
 $F$ is simply connected this section can be extended to a section over $\DD\times w$ which in trivialization~\eqref{triv1} has the form $u\mapsto (u, w, f(u))$ for some map $f\co \DD\to F$.
As in Technical Lemma (\ref{techlem}) set  $f_{\hat h}(u)= \hat h(u,f(u))$.

Then , by Technical Lemma (\ref{techlem})
\begin{equation}
\label{eq:in-Tor}
[f_{\hat h}]-[f]=[\beta]\in \mathrm{Tor}(\pi_2F)
\end{equation}

Consider the sequence of maps $f^n\colon \DD\to F$ defined recursively, $f^0=f$ and $f^{n+1}=f^n_{\hat h}$.
By (\ref{eq:in-Tor}),
\begin{equation}
\label{eq:in-Tor-n}
[f^n]-[f^0]=n{\cdot}[\beta]=0
\end{equation}
for some positive integer $n$.

Note that 
$m(u,f^0(u),n)=m(u,f^n(u),0)$.
In particular, by (\ref{eq:in-Tor-n}),
the two maps $\DD\to E$ given by $u\mapsto m(u,f(u),0)$ and $u\mapsto m(u,f(u),n)$ are homotopic rel. to $\partial \DD$;
denote by $s_t$, $t\in[0,1]$ the homotopy.
It follows that after taking an $n$-fold cover of $\widetilde T^2$ it has a section which can be contracted by a solid torus;
the map of the solid torus is a concatenation of $(u,t)\mapsto m(u,f(u),t)$ for $t\in[0,n]$ 
and $(u,t)\to s_t(u)$ for $t\in[0,1]$. 
\end{proof}

\small
\bibliographystyle{alpha}

\end{document}